\newtheorem{theorem}{Theorem}[section]
\newtheorem{lemma}[theorem]{Lemma}
\newtheorem{proposition}[theorem]{Proposition}
\font\bbc=msbm10 scaled 1200
\newcommand{\R}{\mbox {\bbc R}}
\newcommand{\Lim}[1]{\raisebox{0.5ex}{\scalebox{0.8}{$\displaystyle \lim_{#1}\;$}}}
\date{}
\begin{document}
\title{Non-uniqueness for the ab-family of equations}
  \author{
    John Holmes \and
Rajan Puri }\thanks{Department
 of Mathematics and Statistics,  Wake Forest University,
 Winston Salem, NC 27109, USA, (holmesj@wfu.edu)
 }
 
 \thanks{Department
 of Mathematics and Statistics, Wake Forest University,
 Winston Salem, NC 27109, USA, (purir@wfu.edu). 
}

\date{}


\begin{abstract} 
We study the cubic ab-family of equations, which includes both the Fokas-Olver-Rosenau-Qiao (FORQ) and the Novikov (NE) equations. For $a\neq0$, it is proved that there exist initial data in the Sobolev space $H^s$, $s<3/2$, with non-unique solutions. Multiple solutions are constructed by studying the collision of 2-peakon solutions. Furthermore, we prove the novel phenomenon that for some members of the family, collision between 2-peakons can occur even if the ``faster" peakon is in front of the ``slower'' peakon. 
\end{abstract}

\keywords{
Well-posedness,  
initial value problem,  
Cauchy problem, 
Sobolev spaces, 
Camassa-Holm equation,
solitons, peakons.}

\subjclass[2010]{Primary: 35Q35}

\maketitle

\section{Introduction} We consider the Cauchy problem for the  ab-equation 
    \begin{align}\label{eq1}
        &u_t+u^2u_x-au_x^3+D^{-2}\partial_x\big[\frac{b}{3} u^3+\frac{6-6a-b}{2}uu_x^2\big]+D^{-2}\big[\frac{2a+b-2}{2}u_x^3\big]=0
        \\
        &u(x,0) = u_0(x), \label{eq1-data}
    \end{align}
    where $u_0(x) \in H^s (\mathbb R)$, for $s<3/2$. 
   The two parameters   $a$, $b\in \mathbb R$ and we will assume $a\neq 0$. $u_t$ 
    and $u_x$ denote the derivatives of $u$ with respect to $t$ and $x$, $\partial_x$ 
    denotes differentiation with respect to $x$, and the non-local operator 
    $D^{-2} = (1-\partial_x^2)^{-1}$ 
is the inverse Fourier transform of $(1+\xi^2)^{-1} $.

    Peakons, or peaked traveling wave solutions, were discovered in 1978 by Fornberg and Whitham \cite{FW} and then by Camassa and Holm [CH] in their quest for a water wave model that could capture wave breaking. These are special traveling wave solutions which  take the form 
    $$
    u(x,t) = ce^{-|x-q(t)|}.
    $$
     The Camassa-Holm (CH) equation 
    \begin{align}
    (1-\partial_x)^2 u_t = uu_{xxx} +2u_x u_{xx} - 3uu_x,
    \end{align}
    and later the Degasperis-Procesi (DP) equation 
    \begin{align}
    (1-\partial_x)^2 u_t = uu_{xxx} +3u_x u_{xx} - 4uu_x,
    \end{align}
    introduced in \cite{DP} are two such integral equations which admit peakon solutions. In fact, these equations admit special 2-peakon solutions, called peakon-antipeakon solutions which are of the form 
    $$
    u(x,t) = p(t) e^{-|x+q(t) |} - p(t) e^{-|x-q(t)|}, 
    $$
    where $p(t)$ and $q(t)$, magnitude and position, satisfy a system of ordinary differential equations (ODEs). For a discussion of peakon and other solutions, we refer the reader to Holm
and Ivanov \cite{HI}. Himonas, Holliman and  Grayshan \cite{HHG} show that the peak and antipeak move towards each other, collide in finite time and ill-posedness of the corresponding Cauchy problems in $H^s$, $s<3/2$ is established due to this event. This symmetry does not exist in the ab-family of equations. 
    
    Two members of ab-family (\ref{eq1}) have been studied extensively in different contexts by several researchers. In particular, the choice of parameters $a = 1/3$, $b = 2$ corresponds to the Fokas-Olver-Rosenau-Qiao (FORQ) equation
    \begin{equation}\label{eq2}
        u_t+u^2u_x-u_x^3+D^{-2}\partial_x\big[\frac{2}{3} u^3+uu_x^2\big]+D^{-2}\big[\frac{1}{3}u_x^3\big]=0,
    \end{equation}
    derived in Fokas \cite{F}, Fuchssteiner \cite{Fu}, Olver and Rosenau \cite{OR}, and Qiao \cite{Q};
    while the choice a = 0, b = 3 gives the Novikov equation (NE)
    \begin{equation}\label{eq3}
        u_t+u^2u_x+D^{-2}\partial_x\big[\frac{1}{3} u^3+\frac{3}{2}uu_x^2\big]+D^{-2}\big[\frac{1}{2}u_x^3\big]=0,
    \end{equation}
    derived by Novikov \cite{N}.
    Both of these equations are integrable equations with a bi-Hamiltonian structure, Lax pair, an infinite hierarchy of hereditary symmetries. and infinitely many conserved quantities, for a more detailed discussion of integrability see \cite{Q1} and the references contained therein. In particular, classical solutions to both equations conserve the $H^1$ norm.
        
   The well-posedness theory for the ab-family is not completely understood. Some partial results are as follows. Himonas and Mantzavinos \cite{HMA} showed that FORQ is well-posed in $H^s$, with $s>5/2$ and this was extended to a four-parameter family in \cite{HMA2} which includes the $ab$-family. A nonuniqueness result by Himonas and Holliman \cite{AH} showed that the FORQ equation is ill-posed in $H^s$ for any $s<3/2$. There is no theory concerning well-posedness in the gap $3/2\le s\le 5/2$. In contrast, the NE is well-posed in $H^s$ for all $s>3/2$, see \cite{BO} for details, and Himonas, Kenig and Holliman \cite{HKM} showed ill-posedness in $H^s$ for $s<3/2$. Both the ill-posedness results for the NE equation and the FORQ equation study the behavior of the solution near the time of collision of a 2-peakon solution.  
The method used to prove ill-posedness for both the NE and FORQ equations is similar to that used by many authors for other nonlinear evolution equations. For instance, Bourgain and Pavlovic \cite{BP} studied the Navier-Stokes equations in Besov spaces and Christ, Colloander and Tao \cite{CCT} used a similar technique to study defocusing dispersive PDEs. 
 We too use this idea to prove our main theorem, which is stated as follows. 
    \begin{theorem}\label{main-thm}
For all $b \in \mathbb{R}$ and $a \neq 0$, solutions to the Cauchy problem for the ab-equation are not unique in $H^s$ when $ s<3/2$.
\end{theorem}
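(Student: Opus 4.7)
The plan is to construct, for every $a \neq 0$ and $b \in \mathbb{R}$, two distinct weak $H^s$-solutions of \eqref{eq1}--\eqref{eq1-data} sharing the same initial datum, by exploiting the collision of 2-peakon solutions. The first step is to reduce the PDE to an ODE system by substituting the ansatz $u(x,t) = p_1(t)e^{-|x-q_1(t)|} + p_2(t)e^{-|x-q_2(t)|}$ into the weak formulation of \eqref{eq1}, using $(1-\partial_x^2)e^{-|x|} = 2\delta_0$, and carefully tracking the $\sgn(x-q_i)$-contributions near each peak. After matching coefficients of the Dirac masses $\delta_{q_1},\delta_{q_2}$ together with their regular parts, one obtains a closed system of four ODEs for $(p_1,p_2,q_1,q_2)$ whose right-hand side depends polynomially on $p_i$ and on $e^{\pm(q_1-q_2)}$ and $\sgn(q_1-q_2)$. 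The parameter $a$ contributes additional terms, absent from the Novikov reduction, in both the position and amplitude equations.

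Next, I would perform a dynamical analysis of this reduced system to exhibit initial conditions leading to finite-time collision. Identifying a conserved quantity associated with the $H^1$-type integral of the PDE reduces the problem to a phase-plane study for the relative position $r=q_2-q_1$ and a suitable combination of the amplitudes. The goal is to select $(p_1^0,p_2^0,q_1^0<q_2^0)$ so that $r(t)\to 0^+$ as $t\to T^{*-}$ for some finite $T^*>0$, while the $p_i(t)$ remain bounded. For $a\neq 0$ there is sufficient extra flexibility to additionally realize the novel regime from the abstract, in which the peak behind has smaller speed than the one in front yet still catches up. Since the collision state $u_0:=\lim_{t\to T^{*-}}u(\cdot,t)$ is a sum (or limit) of peakons, it lies in $H^s$ for every $s<3/2$.

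The non-uniqueness then follows because the reduced ODE system is non-Lipschitz on the collision set $\{q_1=q_2\}$. From the degenerate profile $u_0$ at $t=T^*$ one can concatenate either with the natural ``trivial'' continuation (e.g.\ a single peakon, or the zero solution when the collision cancels the amplitudes) or, using the symmetry $(x,t)\mapsto(-x,-t)$ that preserves \eqref{eq1}, with a second 2-peakon branch obtained by time-reversing a distinct admissible pre-collision trajectory through the same degenerate point. Each construction yields a bona fide distributional solution in $L^\infty_{\rm loc}([0,\infty);H^s)$ that agrees with $u_0$ at $t=T^*$; the two solutions disagree for $t>T^*$, and shifting time so that $T^*$ is the initial instant proves the theorem.

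The principal obstacle will be the dynamical analysis of the ODE system in the second step, especially the ``faster-in-front'' regime that cannot occur for $a=0$. Without the antisymmetric peakon-antipeakon mechanism available for Camassa-Holm, there is no a priori attractive interaction, and the collision must be extracted delicately from the structure introduced by the $-au_x^3$ term. At the same time, one must control the amplitudes $p_i(t)$ so that the 2-peakon ansatz remains valid up to time $T^*$ and the limiting profile $u_0$ is genuinely of peakon type rather than blowing up in $H^s$.
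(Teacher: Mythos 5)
Your overall strategy coincides with the paper's: reduce to the 2-peakon ODE system, produce a finite-time collision with bounded amplitudes, identify the collision state as a single peakon (or zero) in $H^s$ with $s<3/2$, and then use the symmetry $(x,t)\mapsto(-x,-t)$ to obtain two distinct solutions emanating from the collision profile. The genuine gap is that the decisive middle step is only announced, not proved: you state as a ``goal'' that for every $a\neq 0$ and $b\in\mathbb{R}$ one can choose data so that $r(t)\to 0^+$ in finite time while the $p_i$ stay bounded, and the one concrete mechanism you propose for it --- a conserved quantity associated with an $H^1$-type integral of the PDE --- is not available for generic $(a,b)$. Only special members of the family (FORQ, NE) are integrable and conserve the $H^1$ norm; for general $a,b$ no such invariant is at hand, so the phase-plane reduction you describe has nothing to run on. Since this dynamical statement is the actual content of the theorem (as you yourself note in flagging the ``principal obstacle''), the proposal as it stands does not yet contain a proof.

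For comparison, the paper closes this gap without any PDE-level invariant, by integrating the ODE system directly in the variables $q=q_2-q_1$, $h=p_2-p_1$, $w=p_1+p_2$, $z=p_1p_2$ of \eqref{qhwz-system}: one finds $z$, $h^2$ and $w^2$ as explicit bounded functions of $q$ on $q\ge 0$ (e.g.\ \eqref{z-eq}), which yields boundedness of $p_1,p_2$ up to the collision. The collision itself is forced through the quantity $p=p_2^2-p_1^2$: by \eqref{eq10}, $p'=2(b-2)p_1p_2\big((p_1^2+p_2^2)e^{-q}+2p_1p_2e^{-2q}\big)$ and the bracket is bounded below by $(p_1+p_2)^2e^{-2q}\ge 0$, so the signs of $p_1p_2$ and $b-2$ control the monotonicity of $p$; meanwhile \eqref{eq110} gives $q'=p\,L_a(q)$ with $L_a(q)=1-a-(1-3a)e^{-2q}$ trapped between $ca$ and $2a$ (hence of the fixed sign of $a$) once $q(0)=\mu(a)$ is chosen suitably small. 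Splitting into four cases according to $\sgn(a)$ and $\sgn(b-2)$, with peakon-antipeakon data in two cases and ordered 2-peakon data in the others (this ordering is exactly what produces the ``faster peakon in front'' collisions), one gets $q'\le-\epsilon<0$, hence collision or vanishing of some $p_i$ in finite time. You would also need the short but necessary verification that $u(t)\to C$ in $H^s$ as $t\to T^-$ (Fourier transform plus dominated convergence, using $s<3/2$), which you assert without argument; your time-reversal endgame itself is sound and matches the paper.
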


Perhaps the most interesting phenomenon discovered in our proof, is that multipeakon solutions to the ab-equation interact unlike classical solitons. The Korteweg-de Vries (KdV) equation is the canonical example of a nonlinear PDE supporting multi-soliton solutions. When two solitons collide, although they interact in a nonlinear way, they decouple; both before and after the collision, they act similarly to a linear superposition of two solitons. Moreover, since their speed is proportional to their height, if a larger soliton begins in front of a smaller soliton, they will never collide. For more information describing their interaction, see Benes, Kasman and Young \cite{BKY}. 

A similar phenomenon has been observed of the 2-peakon solutions of Camassa-Holm type equations.
Shown below is a numerical solutions to the  FORQ equation, with the amplitudes of the two peaks shown in dotted and dashed, while the distance between the peaks is shown in solid. Numerically, the two peakons pass through each other relatively undisturbed, as shown in Figure \ref{fig:forqsln}. (However, we note that for all of the 2-peakon solutions we consider, since solutions are not unique, there are other solutions which are not described by the numerical solution.)  In contrast, for some values of $a$ and $b$, our 2-peakon solutions are entangled and do not seem to separate; we prove that smaller solitons can begin behind larger solitons, and yet, they collide.  In Section 2, we contrast the interaction of the 2-peakon solutions of the ab-equation with the solution of the FORQ equation. 
\begin{figure} [h!]
   \begin{minipage}{1
  \textwidth}
     \centering
\includegraphics[width=0.5\textwidth]{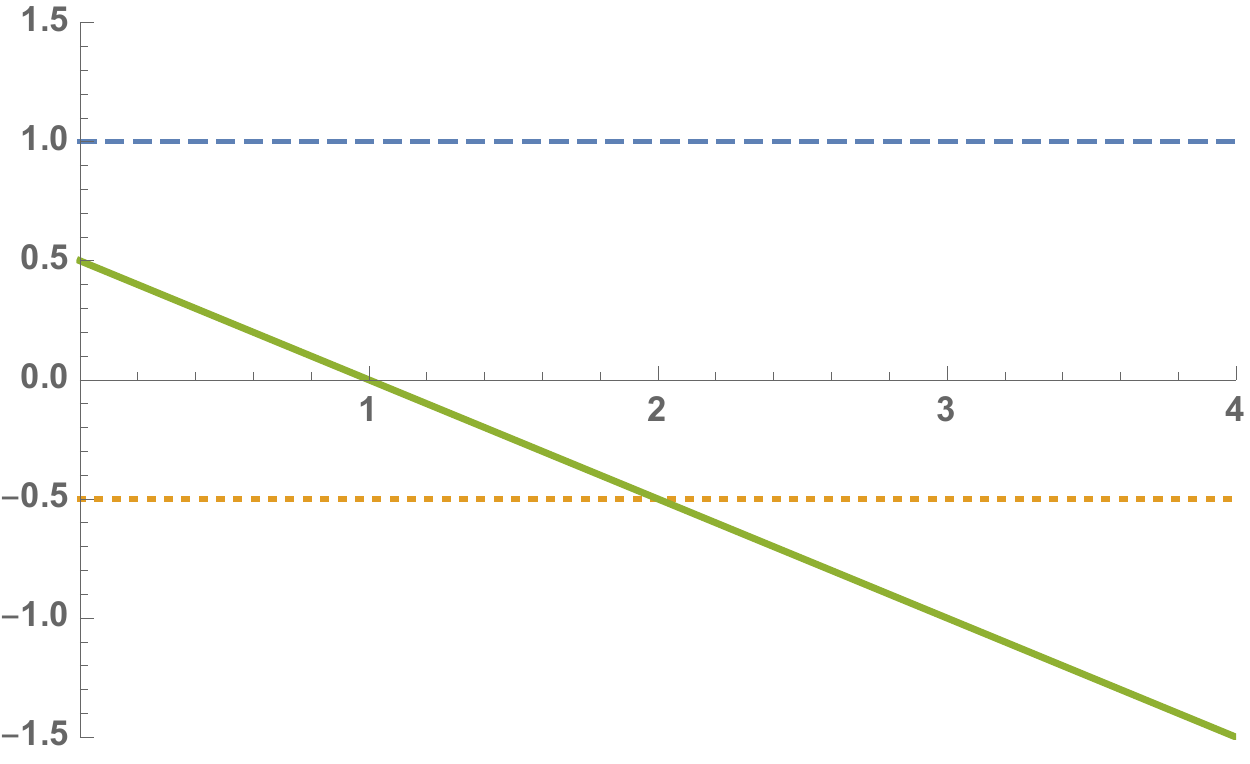}  
\caption{FORQ wave heights (dotted and dashed) and relative positions (solid).}
\label{fig:forqsln} 
   \end{minipage}\hfill
 
\end{figure}

In the next section, we will begin by studying a system of ODEs which describes the time evolution of a 2-peakon solution to the ab-equation. We will show that the solution to the ODE system does not blow up as the two peakons collide. Then, we will show that at the time of collision, the 2-peakon solution reduces to a single peakon solution, which will complete the proof of nonuniqueness. 
 
 \section{The ODE System}
 We begin by recalling the relationship between the 2-peakon solutions of the ab-equation and a system of ODEs. We will study the system of ODEs, prove some preliminary results necessary in the proof of ill-posedness,  then choose the initial data which we can show converges to a single peakon solution. 
\begin{lemma}{ \cite{Ama} } The   2-peakon  function
\begin{equation}\label{eq4}
        u(x,t)=p_1(t)e^{-|x-q_1(t)|}+ p_2(t)e^{-|x-q_2(t)|},
    \end{equation}
    solves the ab-equation if and only if the positions $q_1, q_2$ and the momenta $p_1, p_2$ satisfy the $4\times 4$ system of ordinary differential equations:
     \begin{equation} \label{eq5}
    \begin{split}
    q_1^{'} & = (1-a)p_1^2+2p_1p_2e^{-|q_1-q_2|}+(1-3a)p_2^2e^{-2|q_1-q_2|}\\
    q_2^{'}& = (1-a)p_2^2+2p_1p_2e^{-|q_1-q_2|}+(1-3a)p_1^2e^{-2|q_1-q_2|}\\
    p_1^{'}& = (2-b)sgn(q_2-q_1)p_1p_2e^{-|q_1-q_2|}(p_1+p_2e^{-|q_1-q_2|})\\
    p_2^{'}& = (2-b)sgn(q_1-q_2)p_1p_2e^{-|q_1-q_2|}(p_1e^{-|q_1-q_2|}+p_2).
    \end{split}
    \end{equation}
\end{lemma}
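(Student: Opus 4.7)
The plan is to prove the lemma by direct substitution of the ansatz \eqref{eq4} into the ab-equation and reducing the resulting distributional identity to the ODE system by matching coefficients. Since the peakon profile satisfies $(1-\partial_x^2)e^{-|x-q|} = 2\delta(x-q)$, the most efficient route is to apply $D^2 = 1-\partial_x^2$ to the entire equation, converting it into the local form
\begin{equation*}
D^2\bigl(u_t + u^2 u_x - a u_x^3\bigr) + \partial_x\Bigl[\tfrac{b}{3}u^3 + \tfrac{6-6a-b}{2}\, u u_x^2\Bigr] + \tfrac{2a+b-2}{2}\, u_x^3 = 0.
\end{equation*}
Under the ansatz the momentum $m := D^2 u$ reduces to $2p_1\delta(x-q_1) + 2p_2\delta(x-q_2)$, so the left-hand side becomes a sum of distributions of two essential types: multiples of the Dirac masses $\delta(x-q_i)$, and continuous combinations of $e^{-k|x-q_i|}$ and $\sgn(x-q_i)e^{-k|x-q_i|}$ for $k=1,2$. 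The lemma follows once each family of coefficients is shown to vanish.

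First, I would record the derivatives of $u$. A direct calculation yields $u_t = \sum_i\bigl(p_i' - p_i q_i'\,\sgn(x-q_i)\bigr)e^{-|x-q_i|}$ and $u_x = -\sum_i p_i\,\sgn(x-q_i)e^{-|x-q_i|}$, together with $u_{xx} = u - 2\sum_i p_i\delta(x-q_i)$ distributionally. Next, I would expand the cubic nonlinearities $u^2u_x$, $u^3$, $uu_x^2$, $u_x^3$ and their $x$-derivatives using the symmetric convention $\sgn(0)=0$ when evaluating $u_x$ at a peak; this is the natural weak-solution convention consistent with $u \in H^s$, $s < 3/2$. With this convention, at $x = q_i$ the slope contribution of the $i$th peak vanishes, while the contribution of the other peak equals $-p_j\sgn(q_i-q_j)e^{-|q_i-q_j|}$.

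The resulting identity then splits naturally into two pieces. Matching the coefficient of $\delta(x-q_i)$ -- which arises from $D^2 u_t$ and from those derivatives of the cubics that produce Dirac masses at the jumps of $u_x$ -- yields the momentum equations $p_i'$, and in particular accounts for the common factor $(2-b)$ appearing there. Matching the coefficients of $\sgn(x-q_i)e^{-|x-q_i|}$ and $\sgn(x-q_i)e^{-2|x-q_i|}$ produces the position equations $q_i'$ with the weights $(1-a)$, $2$, and $(1-3a)$.

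The main obstacle will be the bookkeeping at the peak locations: the terms $u_x^3$, $uu_x^2$, and their $x$-derivatives contribute both Dirac and smooth pieces, and one must verify, for instance, that $\partial_x(u_x^2)$ produces a $\delta$ at each $q_i$ weighted by the average of $u_x$ on either side, which under $\sgn(0)=0$ reduces to the slope coming from the other peak. Similarly, one must confirm that the $\delta'$-type contributions arising from $D^2$ of the transport piece cancel against the corresponding contributions from $\partial_x(\tfrac{b}{3}u^3 + \tfrac{6-6a-b}{2}uu_x^2)$ at $x=q_i$. Once every jump and every Dirac contribution is tracked consistently, the coefficient identities collapse to the four relations displayed in \eqref{eq5}. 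This result is recorded in \cite{Ama}, and the proof sketched here is essentially a verification along this route.
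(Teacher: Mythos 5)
The paper does not prove this lemma at all -- it is imported from \cite{Ama} -- so your proposal has to stand on its own. Your overall strategy is the standard one and is sound in outline: insert the ansatz, use $(1-\partial_x^2)e^{-|x-q|}=2\delta(x-q)$ to pass to the local form, compute all cubic terms as piecewise-smooth functions before differentiating (so no ill-defined products of distributions arise), adopt the $\sgn(0)=0$ convention, and match coefficients of the linearly independent distributional pieces. Applying $D^2$ to the nonlocal equation is indeed an equivalence, so the reduction itself is legitimate.

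However, the bookkeeping you describe is wrong in two concrete places and would stall as written. First, note $u_t=\sum_i\bigl(p_i'+p_iq_i'\,\sgn(x-q_i)\bigr)e^{-|x-q_i|}$ (you have the wrong sign on the $q_i'$ term), and since $D^2\bigl(\sgn(x-q_i)e^{-|x-q_i|}\bigr)=-2\delta'(x-q_i)$, the term $D^2u_t$ is \emph{purely singular}: a combination of $\delta(x-q_i)$ and $\delta'(x-q_i)$ only. Consequently the regular (function) part of the identity contains neither $p_i'$ nor $q_i'$, so the position equations cannot be obtained by ``matching the coefficients of $\sgn(x-q_i)e^{-|x-q_i|}$ and $\sgn(x-q_i)e^{-2|x-q_i|}$''; that regular part must instead cancel identically, which is the structural check where the specific coefficients $\tfrac b3$, $\tfrac{6-6a-b}{2}$, $\tfrac{2a+b-2}{2}$ of the ab-equation enter, and it carries no ODE content. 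Second, your proposed cancellation of $\delta'$-contributions against $\partial_x\bigl[\tfrac b3u^3+\tfrac{6-6a-b}{2}uu_x^2\bigr]$ is impossible: $u^3$ is continuous with piecewise-smooth derivative and $uu_x^2$ has only jump discontinuities, so this term produces $\delta$'s but no $\delta'$'s. The only other source of $\delta'$ is $-\partial_x^2\bigl(u^2u_x-au_x^3\bigr)$, whose $\delta'(x-q_i)$ coefficient is minus the jump of $u^2u_x-au_x^3$ at $q_i$; balancing it against $-2p_iq_i'\delta'(x-q_i)$ gives $q_i'=u(q_i)^2-a\bigl(u_x(q_i^+)^2+u_x(q_i^+)u_x(q_i^-)+u_x(q_i^-)^2\bigr)$, which, with $u_x(q_i^\pm)=\mp p_i-p_j\,\sgn(q_i-q_j)e^{-|q_i-q_j|}$, is exactly the stated equation for $q_i'$; the $\delta(x-q_i)$ coefficients then yield the $p_i'$ equations with the factor $(2-b)$. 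So the route is salvageable, but you must replace the signed-exponential matching and the claimed $\delta'$ cancellation by this $\delta'$-matching (or avoid applying $D^2$ altogether and match function families in the nonlocal form, where no Dirac terms appear).
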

We first observe that if at time zero, $p_1(0), p_2(0) \neq0$, and $q_1 (0) \neq q_2(0) $ (ie we have a multi-peakon initial profile), then, due to the time reversibility of solutions,  nonuniqueness will be shown if either $p_1(t) $ or $p_2(t)$ equal zero at some time, or if $q_1(t) = q_2(t) $ while all the functions remain bounded.

 We define 
$
q(t) \  \dot= \ q_2 - q_1,
$
 $h \  \dot= \ p_2 - p_1$, $w\dot = p_1 +p_2$ 
and $z \  \dot= \ p_1 p_2$. We will assume that at initially, $q_1 (0)= 0$ 
and $q_2(0) = \mu$, and therefore, by continuity,  $q(t) \ge 0$ for some 
$0 \le t \le T^c$, where $T^c $ is the {\em collision} time, i.e when $q (t) = 0$ for the first time. 
    From equation (\ref{eq5}), we have 
    \begin{equation}\label{eq6}
    \begin{split}
     q^{'}=q_2^{'}- q_1^{'} & = \bigg[(1-a)p_2^2+2p_1p_2e^{-|q_1-q_2|}+(1-3a)p_1^2e^{-2|q_1-q_2|}\bigg]\\
                          & -\bigg[(1-a)p_1^2+2p_1p_2e^{-|q_1-q_2|}+(1-3a)p_2^2e^{-2|q_1-q_2|}\bigg]\\
                      &   =(1-a)(p_2^2-p_1^2)-(1-3a)(p_2^2-p_1^2)e^{-2|q_1-q_2|}\\
                      &= (p_2-p_1)(p_2+p_1)\bigg((1-a)-(1-3a)e^{-2|q_1-q_2|}\bigg).
    \end{split}
    \end{equation}
  Additionally, the difference  $p_2^{'}- p_1^{'}$ and $p_2^{'}+ p_1^{'}$ satisfy
       \begin{equation}\label{eq8}
    \begin{split}
    h^{'}=p_2^{'}- p_1^{'} & =\bigg[(2-b)sgn(q_1-q_2)p_1p_2e^{-|q_1-q_2|}(p_1e^{-|q_1-q_2|}+p_2)\bigg]\\ 
                    & -\bigg[(2-b)sgn(q_2-q_1)p_1p_2e^{-|q_1-q_2|}(p_1+p_2e^{-|q_1-q_2|})\bigg]\\
                      &   =-(2-b) p_1p_2e^{-|q_1-q_2|}(p_1+p_2+(p_1+p_2)e^{-|q_1-q_2|})\\
                      & =-(2-b)p_1p_2(p_1+p_2)(1+e^{-|q_1-q_2|})e^{-|q_1-q_2|},
    \end{split}
    \end{equation}
    and,   \begin{equation}\label{eq9} 
    w^{'}= (2-b)p_1p_2(p_1-p_2)(1-e^{-|q_1-q_2|})e^{-|q_1-q_2|} . 
    \end{equation}
Therefore, from the above calculation, we deduce 
\begin{align}\label{qhwz-system}
\begin{aligned}[c]
q' &=  hw( 1-a  -(1-3a)   e^{-2q} ),\\
h' &=- (2-b) wz(1+e^{-q}) e^{-q} ,\\
w' &=-(2-b) hz (1-e^{-q} ) e^{-q} ,\\
z' &= (2-b) hwz e^{-2q} ,
\end{aligned}
\qquad
\begin{aligned}[c]
q(0)&= q_2(0) - q_1(0) =  \mu   ,\\
h(0)&= p_2(0) - p_1(0) = h_0,\\
w(0) &= p_1(0) + p_2(0) = w_0, \\
z(0)&= p_1(0)  p_2(0) = z_0.
\end{aligned}
\end{align}
If  $a=1/3$, then $( 1-a  -(1-3a)   e^{-2q} ) =2/3$, and we set $\mu>0$ to be a  small positive number. Else, we choose a number $1<c<2$ (there is always a choice of $c$) such that 
$$
0<  \mu = \mu(a) = -\frac{1}{2} \ln \left( \frac{ (c+1)  a - 1}{3a-1} \right)  <1 ,
 $$
which implies 
 $$
L_a(\mu) \ \dot = \ 1 - e^{-2\mu} + 3a e^{-2 \mu} -a  = c a .
$$
We note that as $ q$ tends to $0$, $( 1-a  -(1-3a)   e^{-2q} ) $ tends to $2a$, and therefore, the sign of this term remains constant. Also, note that $L_a(\mu) $ can never take the value $0$ when $a\neq 0$ on the domain $ \mu \ge 0$. 
The main results will require different initial data  for $h$, $w$ and $z$   depending upon both $a$ and $b$. Therefore, we hold off on the choices of $p_1(0)$ and $p_2(0)$ until necessary.  
Our first proposition describes the solution to the above system. In particular, we will see that so long as $q \geq 0 $, the functions $h, w, $ and $z$ are bounded and therefore $p_1 = \frac12(w-h)$ and $p_2 = \frac12(h+w)$ remain bounded as well. 

\begin{proposition} 
If  $a \neq 0 $, for any real numbers $h_0, w_0, z_0$, the initial value problem \eqref{qhwz-system} has a unique smooth solution on some positive time interval. Furthermore, in terms of $q$, the functions $h(t )$ and $w(t)$ and $z(t)$ remain  bounded for all $0\le q $.

\end{proposition}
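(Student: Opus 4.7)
The proof falls into two parts: local existence-uniqueness and \emph{a priori} boundedness. For the first, I would invoke Picard-Lindel\"of directly: the right-hand side of \eqref{qhwz-system} is real-analytic in $(q,h,w,z)$ on all of $\mathbb{R}^4$ (each summand is a polynomial in $h,w,z$ times a smooth function of $q$), so a unique $C^\infty$ solution exists on some maximal forward time interval.

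For boundedness, the strategy is to extract conservation laws that express $z$, $h^2$, and $w^2$ as explicit functions of $q$ alone. Observe that $z' = (2-b)hw\,z\,e^{-2q}$ and $q' = hw\,L_a(q)$ share the common factor $hw$; a direct computation then gives
\[
\frac{d}{dt}\!\left[\ln|z| - (2-b)\int_\mu^{q(t)}\frac{e^{-2s}}{L_a(s)}\,ds\right] = 0,
\]
so $z(t) = z_0\exp\!\left((2-b)\int_\mu^{q(t)} e^{-2s}/L_a(s)\,ds\right)$ whenever $z_0 \neq 0$ (the degenerate case $z_0=0$ forces $z \equiv 0$ by uniqueness for the linear equation $z' = (2-b)hw\,z\,e^{-2q}$). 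The same cancellation turns $(h^2)' = -2(2-b)hwz(1+e^{-q})e^{-q}$ and its analogue for $w^2$ into conservation laws
\begin{align*}
h^2(t) &= h_0^2 - 2(2-b)\int_\mu^{q(t)} \frac{z(s)(1+e^{-s})e^{-s}}{L_a(s)}\,ds, \\
w^2(t) &= w_0^2 - 2(2-b)\int_\mu^{q(t)} \frac{z(s)(1-e^{-s})e^{-s}}{L_a(s)}\,ds,
\end{align*}
where $z(s)$ is the explicit function of $q$ just obtained. Since the paper's choice of $\mu$ keeps $L_a(q)$ bounded away from zero on $[0,\mu]$, each integrand is continuous and bounded there, so $z$, $h^2$, and $w^2$ are uniformly bounded on the range $q \in [0,\mu]$. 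The bounds on $p_1=(w-h)/2$ and $p_2=(w+h)/2$ follow immediately.

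The main technical subtlety is that $q' = hw\,L_a(q)$ need not be of one sign along the orbit, so one cannot blindly reparametrize by $q$ and ``divide by $q'$''. I would finesse this by writing each relation as a conserved quantity $\Phi(t) = X(t) - f(q(t))$ and verifying $\tfrac{d\Phi}{dt} \equiv 0$ directly via the chain rule: the factor $hw$ then cancels algebraically in every term, so the identity holds at every time of existence regardless of whether $q$ is monotone. With the conservation laws in hand, the boundedness claim reduces to a routine integral estimate on the compact interval $[0,\mu]$ where $L_a$ is bounded away from zero.
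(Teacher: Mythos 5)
Your proof is correct and follows the same strategy as the paper: combine each of the $z$, $h$, $w$ equations with the $q$ equation of \eqref{qhwz-system} to obtain first integrals expressing $z$, $h^2$ and $w^2$ as explicit functions of $q$, then bound these on the range of $q$ where $L_a$ is bounded away from zero. The differences are matters of execution, and mostly in your favor. The paper forms the quotients $z'/z$ and $h'/q'$, which tacitly assumes $q'\neq 0$ (and $z\neq 0$); your verification that $\Phi(t)=X(t)-f(q(t))$ is conserved by direct differentiation, with the common factor $hw$ cancelling algebraically, makes the identities valid whether or not $q$ is monotone, and your remark on $z_0=0$ covers the degenerate case the paper handles only implicitly. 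Your integral representation $z=z_0\exp\bigl((2-b)\int_\mu^{q}e^{-2s}L_a(s)^{-1}\,ds\bigr)$ also treats $a=1/3$ uniformly, whereas the paper's closed form \eqref{z-eq} carries the exponent $\frac{2-b}{2(1-3a)}$, which degenerates there. The one place you prove less than the literal statement is the range of $q$: you bound $h,w,z$ only for $q\in[0,\mu]$ rather than for all $q\ge 0$. This is in fact the defensible version: for $a<0$ or $a>1$, $L_a$ vanishes at some $q^*>0$, so the paper's parenthetical claim that the denominator in \eqref{f-def} is singular only when $a=0$ is not accurate on all of $[0,\infty)$; but since the collision argument only uses $q$ between $0$ and its initial value $\mu$, where $L_a$ stays between $ca$ and $2a$ by monotonicity, your restricted statement is exactly what the rest of the paper needs.
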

\begin{proof}
The right hand side of the  system \eqref{qhwz-system} is smooth in the arguments $q,h,w,z$ and therefore, by the ODE theorem, has a solution on some time interval $[0, T)$, $T>0$.  
We now derive the relations between $h, w, z$ and $q$. Beginning with $z$, from the equations with $z'$ and $q'$ we find 
\begin{align*}
\frac{z'}{z} = \frac{ (2-b) e^{-2q}  q'}
{  1-a-(1-3a)  e^{-2q} }.
\end{align*}
Therefore, we have 
\begin{align*}
\frac{d}{dt} \ln|z|  =  \frac{(2-b)}{2(1-3a)}  \frac{d}{dt } \ln |1-a-(1-3a)  e^{-2q} | .
\end{align*}
If $z_0 >0$, then by continuity, we can assume $z>0$ for some time and $|z| = z$. Likewise, if $z_0 < 0$, we will assume $|z| = -z$. In either case, integrating from $0$ to $t$ yields 
\begin{align}
\ln \left( \frac{z(t)}{z_0} \right) =  \frac{(2-b)}{2(1-3a)} \ln \left(  \frac{ 1-a-(1-3a)  e^{-2q} }{  1-a-(1-3a)  e^{-2\mu} }  \right) .
\end{align} 
We exponentiate and rearrange terms to find 
\begin{align}\label {z-eq}
z(t) =    z_0  \left(  \frac{ 1-a-(1-3a)  e^{-2q} }{ 1-a-(1-3a)  e^{-2\mu} }  \right)^{  \frac{(2-b)}{2(1-3a)}} .
\end{align}
 From here we can conclude that $z(t) $ remains bounded for all $ t \in [0, T^c]$, since neither the numerator nor the denominator take the value $0$.

 Now, we  will use the above formula for $z(t)$ to find $h(t)$. We have 
 \begin{align*}
 \frac{h'}{q'} =  \frac{- (2-b) wz(1+e^{-q}) e^{-q} } { hw( 1-a  -(1-3a)   e^{-2q} )}, 
 \end{align*} 
 or rearranging we find 
 \begin{align*}
 h h'  =  z\cdot  \frac{- (2-b)  (1+e^{-q}) e^{-q} q' } {   1-a  -(1-3a)   e^{-2q} } .
 \end{align*} 
 We substitute the formula found for $z$ in equation \eqref {z-eq} to get
  \begin{align*}
 h h'  = z_0  \left|  1-a-(1-3a)  e^{-2\mu}   \right| ^{  \frac{-(2-b)}{2(1-3a)}}   \frac{- (2-b)  (1+e^{-q}) e^{-q}  q'} {  | 1-a  -(1-3a)   e^{-2q} |
 ^{1- \frac{(2-b)}{2(1-3a)}}} .
 \end{align*} 
Set 
  \begin{align}\label{f-def} 
f(q) \ \dot = \ z_0  \left|  1-a-(1-3a)  e^{-2\mu}   \right| ^{  \frac{-(2-b)}{2(1-3a)}}   \frac{- (2-b)  e^{-q}   } {  | 1-a  -(1-3a)   e^{-2q} |
 ^{1- \frac{(2-b)}{2(1-3a)}}}, 
 \end{align} 
and then define 
$$
 F_1 (q) \ \dot = \ \int_0^t   (1+e^{-q}) f(q)  dq  . 
 $$
 Since $f(q)$ is smooth and bounded for all $q \ge 0$ (since the denominator is singular only when $a = 0$), $F(q) $ remains smooth, bounded  and differentiable for all $0\le t\le T^c$. 
 Therefore, 
 $$
 h^2   = h_0^2 + 2 F_1(q) ,
 $$
 remains bounded for all $0\le t \le T^c$.

 Next we solve for $w(t)$. We rearrange the equations for $w'$ and $q'$ to find 
\begin{align*}
ww' =  z\cdot \frac{-(2-b)  (1-e^{-q} ) e^{-q}  q'} {  ( 1-a  -(1-3a)   e^{-2q} ) }  = (1 - e^{-q} ) f(q) q',
\end{align*}
where we used the definition of $f(q)$ found in equation \eqref{f-def} and the formula for $z$ found in equation \eqref {z-eq}. 
Defining 
$$
F_2 (q) \ \dot = \ \int _0^t (1 - e^{-q}) f(q) dq ,
$$
we find 
$$
w^2 = w_0^2 + 2 F_2 (q),
$$
and similarly to $h$, $w$ remains smooth and bounded for all $0\le t \le T^c$. 
\end{proof} 

Since $p_1 = \frac12(w-h)$ and $p_2 = \frac12(h+w)$, the above proposition shows that as long as $ q \ge 0$, $p_1, p_2 <\infty$. We will next  choose the initial values for $p_1(0)$ and $p_2(0)$ to be consistent with the above proposition and which will necessarily lead to a collision time $T^c<\infty$.  
    We break our analysis into cases described by the parameters $a$ and $b$. 
    For each case, we will consider the function $p(t)=p_2^2(t)-p_1^2(t)$. From (\ref{eq6}), 
\begin{align}\label{eq110} 
        q'(t) &=p(t)\big((1-e^{-2|q(t)})+a(3e^{-2|q(t)|}-1)\big)
        \\
    \label{eq10}
      p'(t)&=2(b-2)p_1p_2(p_1^2+p_2^2)e^{-|q(t)|}+4(b-2)p_1^2p_2^2 e^{-2|q(t)|}.
    \end{align}
    We will show that $T^c$ exists  and find an upper bound by showing $q'(t)$ is bounded by a negative number so long as $q(t)$ remains non-negative. More precisely, we will prove the following lemma.
    \begin{lemma} For all $b \in \mathbb {R} $ and $a\neq 0$, there exists an initial multipeakon profile such that for some $\epsilon>0$,  $\frac{dq}{dt} < -\epsilon  <0$ for $t\in [0, T^c)$ (and hence $T^c<\infty$) or there exists a time $T^p$ such that at least one of $p_1(T^p)  =0$ or $p_2(T^p) = 0$. 
\end{lemma}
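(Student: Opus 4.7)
The plan is to choose the initial data $p_1(0), p_2(0)$ so that the first alternative in the lemma holds, namely $q'(t) \le -\epsilon$ throughout $[0, T^c)$. Rewriting \eqref{eq110} as
\begin{equation*}
q'(t) = p(t)\, L_a(q(t)), \qquad p(t) = p_2^2(t) - p_1^2(t) = h(t)\, w(t),
\end{equation*}
with $L_a(q) = 1 - a - (1-3a) e^{-2q}$, the task reduces to showing that $L_a(q(t))$ and $p(t)$ maintain constant, opposite signs and are uniformly bounded away from zero on $[0, T^c)$. The $L_a$ factor is handled by the construction of $\mu$ already carried out before the proposition: on $q \in [0, \mu]$, $L_a$ is monotone with endpoint values $2a$ and $ca$, so $\sgn(L_a) = \sgn(a)$ and $|L_a(q)| \ge \min(|2a|, |ca|) =: C_0 > 0$.

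When $b = 2$, the coefficient $(2-b)$ multiplying the right-hand sides of the $h', w', z'$ equations in \eqref{qhwz-system} vanishes, so $h, w, z$ are constant in time and $p(t) \equiv h_0 w_0$. Any choice of $p_1(0), p_2(0)$ with $h_0 w_0 \sgn(a) < 0$ (for instance $p_1(0) = 2, p_2(0) = 1$ when $a > 0$) then instantly yields $q'(t) \le -|h_0 w_0|\, C_0 < 0$ on $[0, T^c)$.

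When $b \neq 2$ and $a \neq 1/3$, I would feed in the explicit formulas $h^2(t) = h_0^2 + 2 F_1(q(t))$ and $w^2(t) = w_0^2 + 2 F_2(q(t))$ from the proposition, recalling that $F_i(\mu) = 0$. From \eqref{f-def} together with the bound $|L_a(q)| \ge C_0$ one obtains a pointwise bound $|f(q)| \le |z_0|\, K$ on $[0, \mu]$ with $K$ depending continuously on $a, b, c$, and $K$ remains bounded as $c \to 2^{\pm}$ precisely because $L_a(0) = 2a \ne 0$. Hence $|F_i(q)| \le 2 K |z_0| \mu$ on $[0, \mu]$. I would then fix $p_1(0), p_2(0)$ so that $h_0 w_0 \sgn(a) < 0$, and afterwards choose $c$ close to $2$ so that $\mu$ is small enough to force $|F_i(q)| \le \tfrac{1}{4} \min(h_0^2, w_0^2)$. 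This gives $h^2(t) \ge \tfrac{1}{2} h_0^2$ and $w^2(t) \ge \tfrac{1}{2} w_0^2$ throughout, so neither $h$ nor $w$ changes sign, $|p(t)| \ge \tfrac{1}{2} |h_0 w_0|$, and therefore $q'(t) \le -\tfrac{1}{2} |h_0 w_0|\, C_0 < 0$. The borderline case $a = 1/3$ is handled by the same scheme after replacing \eqref{z-eq} with the exponential formula for $z(t)$ obtained by integrating $z'/z = \tfrac{3}{2}(2-b) q'(t) e^{-2q}$ along the trajectory; since $\mu$ is free there, it can simply be taken small.

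The main obstacle is verifying that the constant $K$ is uniform as $c \to 2$, which is exactly where the standing hypothesis $a \ne 0$ is essential: it guarantees $L_a(0) = 2a \ne 0$ and prevents the denominator $|L_a(q)|^{1 - (2-b)/(2(1-3a))}$ in \eqref{f-def} from blowing up at the endpoint $q = 0$ in the limit $c \to 2^{\pm}$. The second alternative of the lemma ($p_i(T^p) = 0$) is a logical backup that will not actually arise under the choices above: equation \eqref{z-eq} with $z_0 \ne 0$ forces $z(t) = p_1(t) p_2(t) \ne 0$ on $[0, T^c]$, so neither $p_i$ ever vanishes, and the first alternative alone suffices.
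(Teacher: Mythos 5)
Your proposal is correct, but it proves the lemma by a genuinely different route than the paper. The paper never invokes the Proposition's integral formulas: it runs a four-case sign analysis on $\sgn(a)$ and $\sgn(b-2)$, choosing either a two-peakon or a peakon--antipeakon initial profile so that $(b-2)p_1p_2$ has a fixed sign, and then uses the pointwise inequality $(p_1^2+p_2^2)e^{-|q|}+2p_1p_2e^{-2|q|}\ge (p_1+p_2)^2e^{-2|q|}\ge 0$ to conclude that $p(t)=p_2^2(t)-p_1^2(t)$ is monotone away from zero with sign opposite to $a$; combined with the bounds $2a,\,ca$ on $L_a(q)$ this gives $q'\le -\epsilon$ for the originally fixed $\mu$, with no smallness requirement. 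You instead feed in $h^2=h_0^2+2F_1(q)$ and $w^2=w_0^2+2F_2(q)$ and shrink $\mu$ so that $h$ and $w$ cannot change sign, which yields a single unified argument for all $a\neq 0$ and all $b$ (including $b=2$, which the paper dismisses as trivial, and $a=1/3$ via the modified formula for $z$), and as a bonus shows that under your data the alternative $p_i(T^p)=0$ never occurs. Two small caveats, neither fatal: (i) your bound on $F_i$ is only valid while $q(t)\in[0,\mu]$, so you need the standard continuity/bootstrap remark that $q$ starts at $\mu$ with $q'<0$ and cannot exit $[0,\mu]$ before reaching $0$ (the paper's case arguments rely on the same implicit step); (ii) for $0<a<1/3$ the value $c$ solving $L_a(\mu)=ca$ with $\mu>0$ lies slightly above $2$, not below, so your ``$c\to 2^{\pm}$'' is the right statement -- or one can bypass $c$ entirely and choose $\mu$ small directly, since $L_a(0)=2a\neq 0$ keeps $L_a$ signed and bounded away from zero on $[0,\mu]$. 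The trade-off is that the paper's argument gives monotone growth of $|p(t)|$ without touching $\mu$, while yours buys uniformity across all parameter cases at the cost of a smallness assumption on $\mu$.
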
 
 \begin{proof} If $\min\{T^p, T^c\} = T^p$ and $T^p<\infty$, we are done. Therefore, we will assume $p_1(t)$ and $p_2(t)$ do not equal zero. By continuity, whatever the sign of their initial data is, we may assume the solutions take as well. 
  We prove the theorem in four cases, based upon the values of $a$ and $b$, omitting the trivial case when $b=2$.  
    
  \noindent{\bf Case 1: $ a>0, b>2$.}
  We take the initial data 
  $$
   p_1(0)=\alpha + \delta,\ q_1(0)=0, \ p_2(0)= -\alpha,\ q_2(0)=\mu,
  $$
  shown in Figure \ref{fig:case1}
  and we shall prove that either a collision time exists, or one of the peakons ceases to exist.  In Figure \ref{fig:case1sln} we show the time evolution of $p_1(t)$ and $p_2(t)$ in dotted and dashed respectively, and in solid we show the time evolution of $q(t)$.  
  
By the choice of our initial data: 
$$p(0)=p_2^2(0)-p_1^2(0)= -(2\alpha \delta + \delta^2)<0, \text{ and } q(0)=q_2(0)-q_1(0)= \mu >0. $$ 
\begin{figure} [h!]
   \begin{minipage}{0.5\textwidth}
     \centering
\includegraphics[width=1\textwidth]{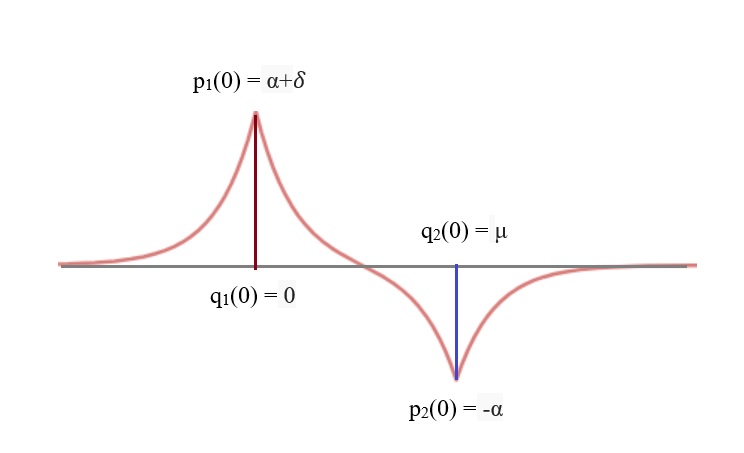}  
\caption{Case 1: Peakon-antipeakon initial data.}
\label{fig:case1} 
   \end{minipage}\hfill
   \begin{minipage}{0.5\textwidth}
     \centering
\includegraphics[width=1\textwidth]{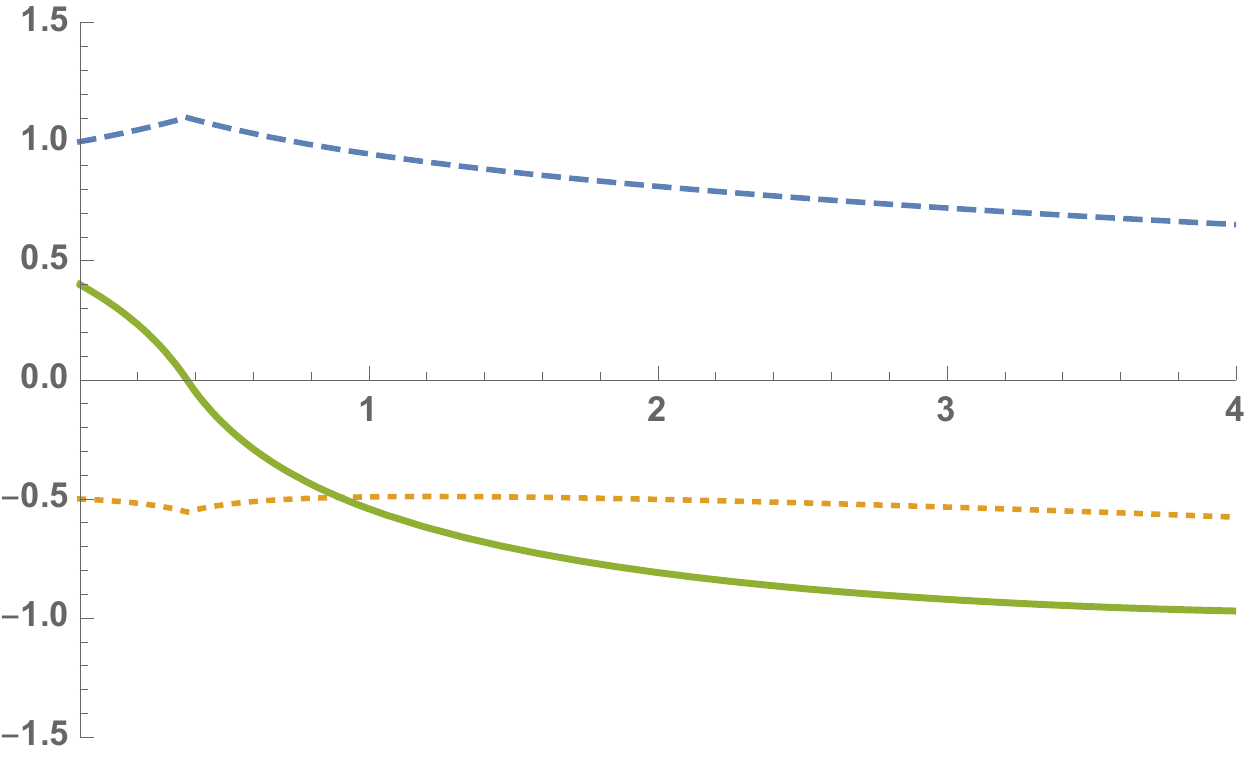}
\caption{The heights of the two peaks are dotted and dashed; $q$ is the solid line.}
\label{fig:case1sln} 
   \end{minipage}
 
\end{figure}
Since $p_2(0)=-\alpha <0$ and $p_1(0)= \alpha + \delta >0, \ p_1(0)p_2(0) <0$ and by continuity $p_1(t)p_2(t) <0$.

We will now show that for all $0\le t \le T^c$,  
$$\frac{dp}{dt}=2(b-2)p_1p_2\bigg((p_1^2+p_2^2)e^{-|q|}+2p_1p_2 e^{-2|q|}\bigg)<0.$$
 Indeed, the following calculation shows that  $\bigg((p_1^2+p_2^2)e^{-|q|}+2p_1p_2 e^{-2|q|}\bigg)>0.$
Use the fact that  $e^{-2|x|} < e^{-|x|}$    
to compute 
$$
     \bigg((p_1^2+p_2^2)e^{-|q|}+2p_1p_2 e^{-2|q|}\bigg) 
     \geq \bigg((p_1^2+p_2^2)e^{-2|q|}+2p_1p_2 e^{-2|q|}\bigg)
    =(p_1+p_2)^2e^{-2|q|}\geq 0. 
    $$
Therefore, we may now use $p(t) < p(0)<0$.  Substituting this into  equation (\ref{eq110}), we have 
$$\frac{dq}{dt} \leq  p(0)\big((1-e^{-2|q(t)| })+a(3e^{-2|q(t)|}-1)\big) .$$
The right hand side is negative, since initially $0< a \le L_a(\mu)  = ca \le 2a$, and as $q  $ decreases, 
$$
   ca \le(1-e^{-2|q(t)| })+a(3e^{-2|q(t)|}-1) \le 2a. 
$$
Therefore we compute $\epsilon$:
$$\frac{dq}{dt} \leq a p(0) = -a(2\alpha \delta + \delta^2) =-\epsilon <0 $$  Hence, either $p_1(t) = 0$, $p_2(t) = 0$, or $q(t) = 0$ in finite time.


  \noindent{\bf Case 2: $a>0, b<2 $.}
 In contrast to the peakon-antipeakon initial profile of Case 1, we take the two peakon initial profile:
  $$
   p_1(0)=\alpha +\delta,\ q_1(0)=0, \ p_2(0)= \alpha ,\ q_2(0)=\mu,
  $$
  shown in Figure \ref{fig:case2}
  and we shall prove that either a collision time exists, or one of the peakons ceases to exist. In contrast to the previous case, numerical experiments show that there are many times in which $q(t) = 0$; $T^c$ is the first time this occurs. Moreover, the numerical solution shows that the two peakons do not decouple, rather, they repeatedly leapfrog each other. 
\begin{figure} [h!]
   \begin{minipage}{0.5\textwidth}
     \centering
\includegraphics[width=1\textwidth]{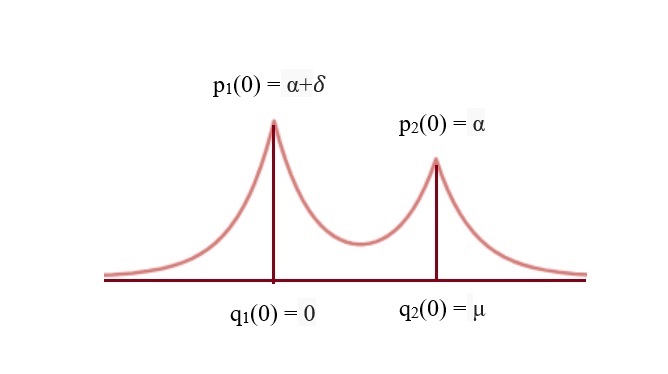}  
\caption{Case 2: 2-Peakon initial data.}
\label{fig:case2} 
   \end{minipage}\hfill
   \begin{minipage}{0.5\textwidth}
     \centering
\includegraphics[width=1\textwidth]{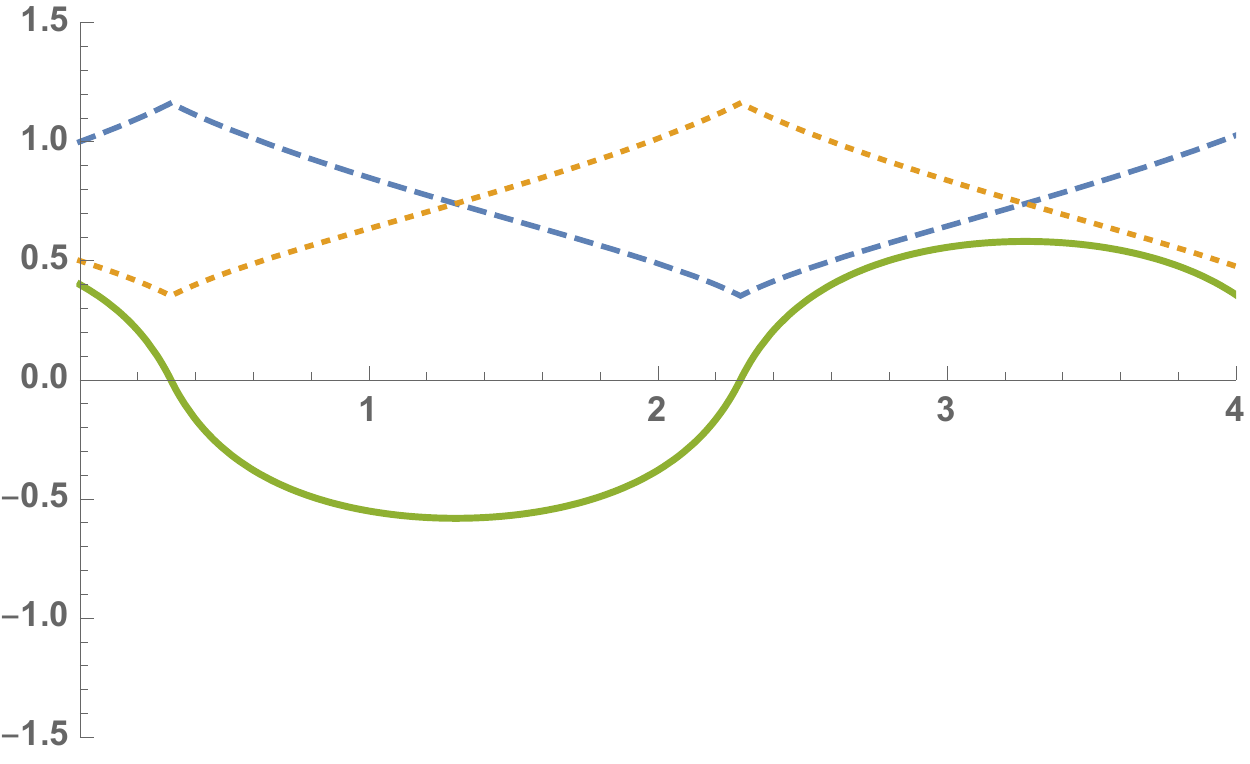}
\caption{The heights of the two peaks are dotted and dashed; $q$ is the solid line.}
\label{fig:case2sln} 
   \end{minipage}
 
\end{figure}
By the choice of our initial data 
$$p(0)=p_2^2(0)-p_1^2(0)= -(2\alpha \delta + \delta^2)<0, \ q(0)=q_2(0)-q_1(0)= \mu >0. $$  
Since $ p_1(0)p_2(0) >0$, by continuity $p_1(t)p_2(t) >0$ for $t\in [0, T^c)$. We have 
$$\frac{dp}{dt}=2(b-2)p_1p_2\bigg((p_1^2+p_2^2)e^{-|q|}+2p_1p_2 e^{-2|q|}\bigg)<0.$$
  Thus,   $p(t)<p(0) <0 \ \forall t\in [0, T^c).$ Now from the equation (\ref{eq110}),
$$\frac{dq(t)}{dt} \leq  p(0)\big((1-e^{-2|q(t)})+a(3e^{-2|q(t)|}-1)\big) \ \text{for} \ t\in [0, T^{c}],$$
and, similarly to the first case
$$\frac{dq(t)}{dt} \leq a p(0) = -a(2\alpha \delta + \delta^2)  =-\epsilon  <0.
 $$ 
 Thus, we have shown that there exists an initial profile such that $p_1(t) = 0$, $p_2(t) = 0$, or $q(t) = 0$ in finite time.


  \noindent{\bf Case 3: $a<0, b>2$.}
  Similar to Case 2, we take the two peakon initial profile:
  $$
   p_1(0)=\alpha ,\ q_1(0)=0, \ p_2(0)= \alpha +\delta,\ q_2(0)=\mu,
  $$
  shown in Figure \ref{fig:case3}. As shown in Figure \ref{fig:case3sln} the solution of this 2-peakon initial data behaves similarly to Case 2. 
\begin{figure} [h!]
   \begin{minipage}{0.5\textwidth}
     \centering
\includegraphics[width=1\textwidth]{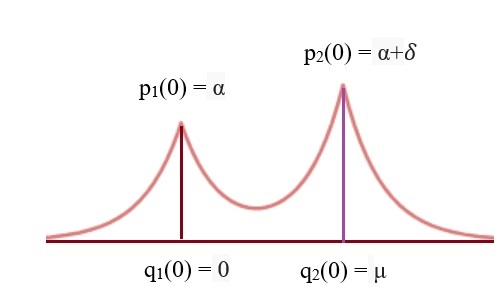}  
\caption{Case 3: 2-Peakon initial data.}
\label{fig:case3} 
   \end{minipage}\hfill
   \begin{minipage}{0.5\textwidth}
     \centering
\includegraphics[width=1\textwidth]{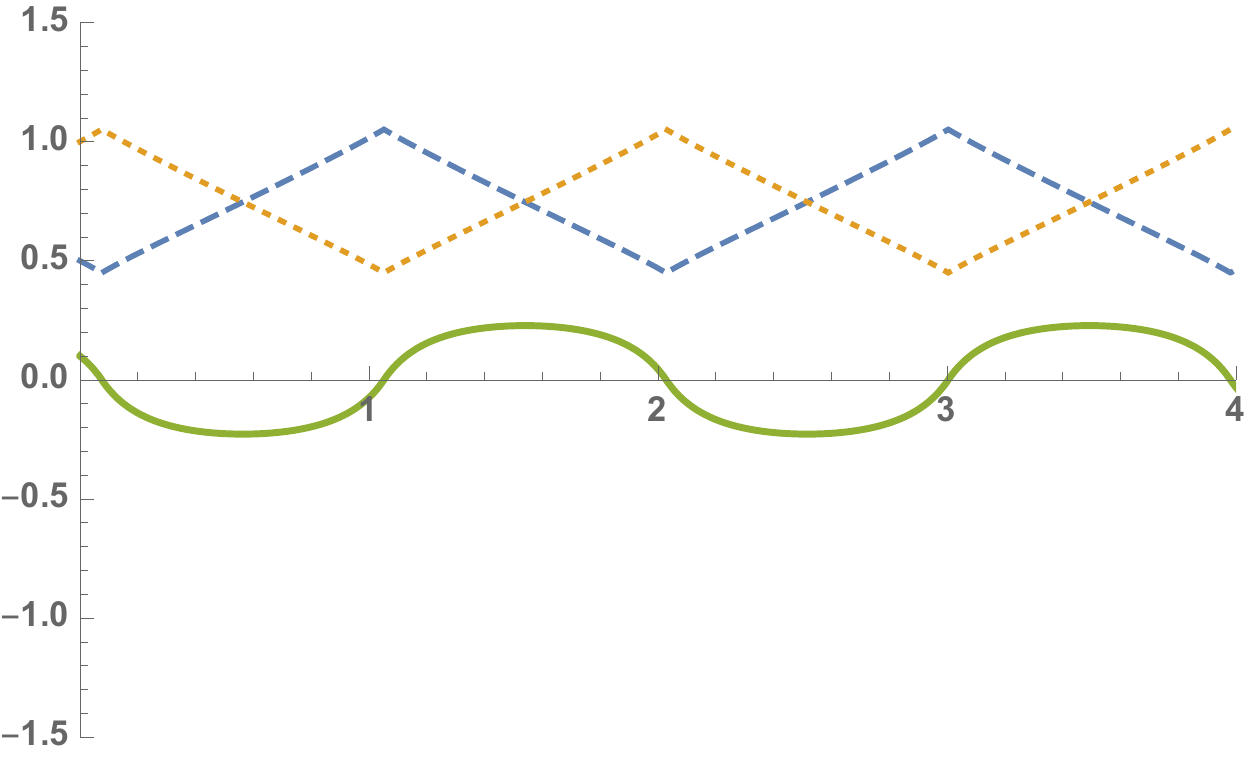}
\caption{The heights of the two peaks are dotted and dashed; $q$ is the solid line.}
\label{fig:case3sln} 
   \end{minipage}
 
\end{figure}

By the choice of our initial data 
$$p(0)=p_2^2(0)-p_1^2(0)= (2\alpha \delta + \delta^2)>0, \ q(0)=q_2(0)-q_1(0)= \mu >0. $$
By continuity $p_1(t)p_2(t) >0$  and we will assume this hold for $t \in [0, T^c)$, thus
$$\frac{dp}{dt}=2(b-2)p_1p_2\bigg((p_1^2+p_2^2)e^{-|q|}+2p_1p_2 e^{-2|q|}\bigg)>0.$$
and therefore,   $p(t)>p(0)>0 $.

Recalling the choice of $\mu$, we have $ L_a(\mu) =ca<0$ and $\Lim{q\to \ 0}L_a(q)=2a <0.$ 
Hence, 
\begin{equation}\label{l-est}
2a <L_a(q(t)) < ca <0, t\in [0, T^c) .
\end{equation}
Therefore,
using equation (\ref{eq110}) we have
$$\frac{dq(t)}{dt}=  p(t)L_a(q(t) )  \leq  p(0)\cdot ca = -\epsilon<0 .
$$
 Again, this shows that there exists an initial profile such that $p_1(t) = 0$, $p_2(t) = 0$, or $q(t) = 0$ in finite time.  


  \noindent{\bf Case 4: $a<0, b<2$.}
  Similar to Case 1, we take a  peakon-antipeakon initial profile:
  $$
   p_1(0)=-\alpha ,\ q_1(0)=0, \ p_2(0)= \alpha +\delta,\ q_2(0)=\mu,
  $$
  and as before, we will assume $T^c \le T^p$. As shown in Figure \ref{fig:case4}, this profile is not the same as in Case 1; though the behavior of the solution resembles closely Case 1 as shown in Figure \ref{fig:case4sln}.
\begin{figure} [h!]
   \begin{minipage}{0.5\textwidth}
     \centering
\includegraphics[width=1\textwidth]{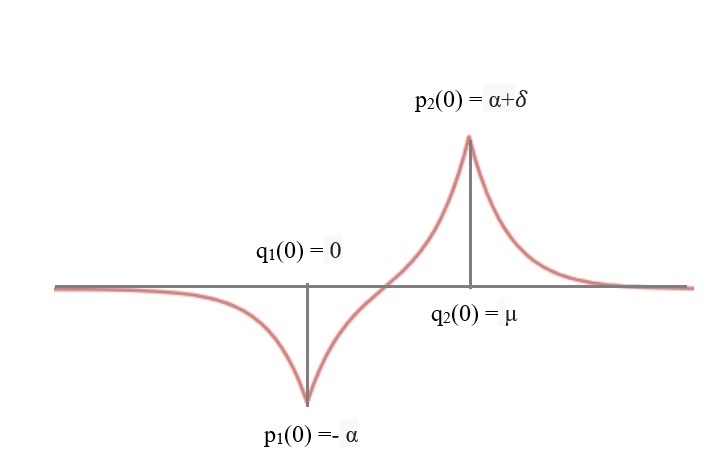}  
\caption{Case 4: Antipeakon-peakon initial data.}
\label{fig:case4} 
   \end{minipage}\hfill
   \begin{minipage}{0.5\textwidth}
     \centering
\includegraphics[width=1\textwidth]{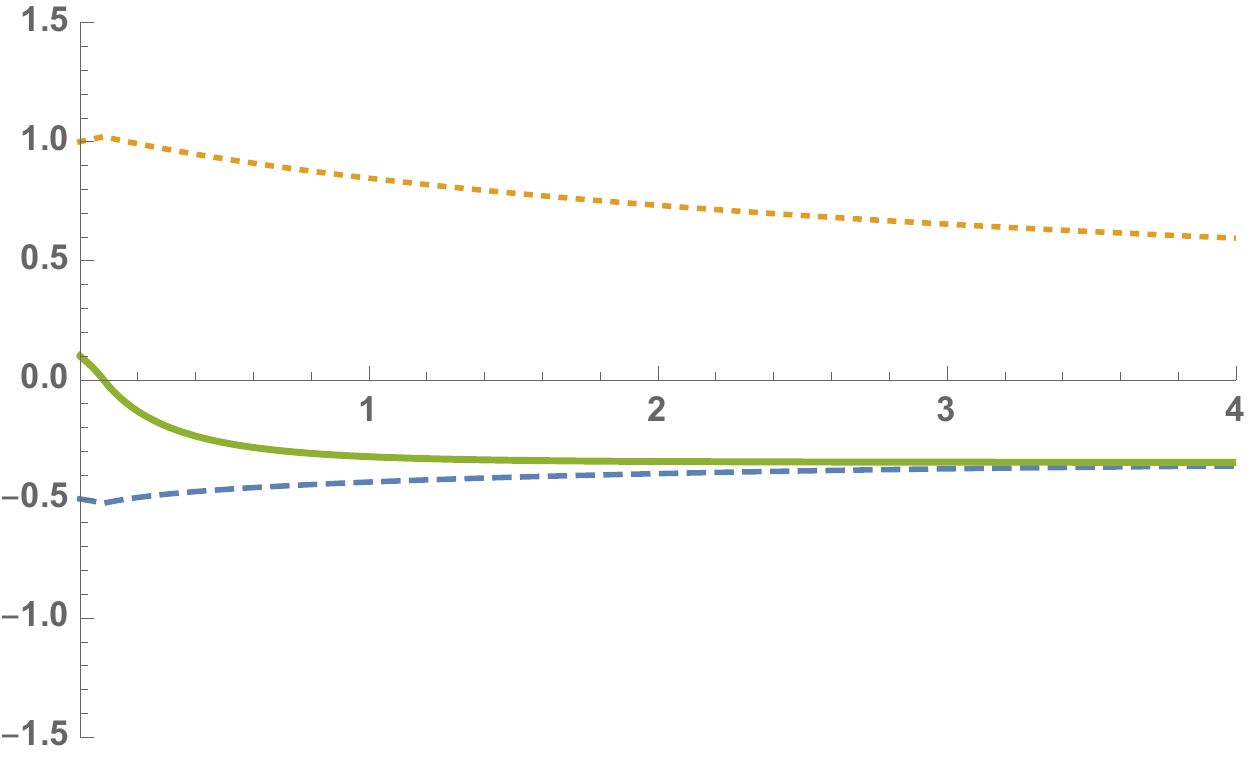}
\caption{The heights of the two peaks are dotted and dashed; $q$ is the solid line.}
\label{fig:case4sln} 
   \end{minipage}
 
\end{figure}

  By the choice of our initial data $$p(0)=p_2^2(0)-p_1^2(0)= (2\alpha \delta + \delta^2)>0, \text{ and }q(0)=q_1(0)-q_2(0)= \mu >0. $$ 
 Since $ p_1(0)p_2(0) <0$, by continuity $p_1(t)p_2(t) <0$, and therefore an argument similar to the argument presented in Case 1 shows 
 $$\frac{dp}{dt}=2(b-2)p_1p_2\bigg((p_1^2+p_2^2)e^{-|q|}+2p_1p_2 e^{-2|q|}\bigg)>0,$$
thus $p(t)>p(0)>0 \ \forall \ t\in [0, T^c).$ 
Therefore, using equation \eqref{eq110} and the estimate in inequality \eqref{l-est} we have again
$$\frac{dq(t)}{dt}= p(t)L_a(q(t)) \leq  a c p(0)   = -\epsilon<0 .$$ 
This completes the fourth case, and we have shown that for every choice of $a$ and $b$, there is an initial profile which leads to  $\min\{T^p, T^c\} < \infty$. 
\end{proof}

\subsection{Proof of Theorem \ref{main-thm}}
We are now prepared to show nonuniqueness in $H^s$ for the ab-family of equations stated in Theorem \ref{main-thm}. 
To complete the proof, we shall show that at time, $T = \min\{T^c, T^p\}$, the solution to the Cauchy problem  \eqref{eq1}-\eqref{eq1-data} with the initial profiles (depending upon $a$ and $b$) given in the previous subsection, is either a single peakon or the zero solution.

We define the collision function: 
\begin{equation}\label{eq18}
    C(x)=p^*e^{-|x-q^* |}.
\end{equation}
where if $T = T^c$, $q ^* = \Lim{t\to \ T^-} q_1(t) $ is the location of the collision and $p^{*}= \Lim{t\to \ T^-}(p_1(t)+p_2(t))$ is the magnitude of the collision. 
If $ T = T^p$, then  label $i$ and $j$ ($1$ and $2$) such that $\Lim{t\to \ T^-} p_i(t) = 0$, and $ \Lim{t\to \ T^-}p_j (t) \neq0$. We define $q ^* = \Lim{t\to \ T^-} q_j(t) $. If both $p_1$ and $p_2$ converge to zero at time $T$, since $C(x) = 0$, the choice in $q^*$ is irrelavent. 
We will now show that the solution converges to the collision function. 
\begin{lemma}
The $H^{s}$  limit of $u$, as $t$ approaches $T$ from below is C:
$$\lim_{t\to \ T^{-}}||u(t)-C||_{H^{s}} =0.$$ 
\end{lemma}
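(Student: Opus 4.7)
The plan is to reduce the convergence in $H^s$ to two elementary ingredients. First, the basic peakon profile $\psi(x)=e^{-|x|}$ lies in $H^s(\mathbb{R})$ for every $s<3/2$: by direct Fourier-side computation $\hat\psi(\xi)=2/(1+\xi^2)$, so
$$\|\psi\|_{H^s}^2=\int_{\mathbb{R}} 4(1+\xi^2)^{s-2}\,d\xi <\infty \iff s<3/2.$$
Second, translation is a strongly continuous isometry on $H^s$; in particular $\|\psi(\cdot-y)-\psi(\cdot-y_0)\|_{H^s}\to 0$ whenever $y\to y_0$, while $\|\psi(\cdot-y)\|_{H^s}=\|\psi\|_{H^s}$ for every $y$.

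The next step is to show that each of $p_1(t),p_2(t),q_1(t),q_2(t)$ has a limit as $t\to T^-$. The preceding proposition guarantees $h,w,z$ (hence $p_1=(w-h)/2$ and $p_2=(w+h)/2$) remain bounded on $[0,T^c]$. Inspecting the ODE system \eqref{qhwz-system} (equivalently \eqref{eq5}), every right-hand side is then bounded on $[0,T)$, so the four functions are uniformly Lipschitz and extend continuously to $t=T$. Denote the limits by $p_i^*,q_i^*$.

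Now I split $u(\cdot,t)-C$ according to which event defines $T$. If $T=T^c$, then by definition of the collision time $q_1^*=q_2^*=q^*$ and $p^*=p_1^*+p_2^*$, so
$$u(\cdot,t)-C(\cdot)=\sum_{i=1,2}\Bigl\{[p_i(t)-p_i^*]\,\psi(\cdot-q_i(t))+p_i^*\bigl[\psi(\cdot-q_i(t))-\psi(\cdot-q^*)\bigr]\Bigr\}.$$
Taking $H^s$ norms and using the triangle inequality, the first summand in each $i$ tends to $0$ because $p_i(t)\to p_i^*$ and translation is an isometry, while the second tends to $0$ by translation continuity of $\psi$ combined with $q_i(t)\to q^*$. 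If instead $T=T^p$ and (relabeling if necessary) $p_1^*=0$ with $p_2^*\ne 0$, then $q^*=q_2^*$ and the analogous splitting
$$u(\cdot,t)-C(\cdot)=p_1(t)\,\psi(\cdot-q_1(t))+[p_2(t)-p_2^*]\,\psi(\cdot-q_2(t))+p_2^*\bigl[\psi(\cdot-q_2(t))-\psi(\cdot-q^*)\bigr]$$
sends each of the three terms to $0$ in $H^s$ for the same reasons (note the first term needs only $p_1(t)\to 0$, not convergence of $q_1$, though the latter holds).

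The argument requires no new analysis beyond the boundedness supplied by the proposition and the standard continuity properties of $H^s$; the main obstacle is just the bookkeeping to see that the collision function $C$ is exactly the pointwise limit of $u(\cdot,t)$ with the correct weights $p_i^*$ at the correct locations $q_i^*$, a fact made routine by the Lipschitz extension of the ODE solution to the closed interval $[0,T]$.
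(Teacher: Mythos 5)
Your proof is correct, but it takes a genuinely different route from the paper. The paper works entirely on the Fourier side: it writes $\|u(t)-C\|_{H^s}^2$ as an explicit integral of $(1+\xi^2)^{s-2}$ times the squared modulus of $p_1e^{-i\xi q_1}+p_2e^{-i\xi q_2}-p^*e^{-i\xi q^*}$, bounds the integrand by $M^2(1+\xi^2)^{s-2}$ (integrable precisely because $s<3/2$), and passes the limit inside by dominated convergence, at which point the integrand vanishes by the definition of $p^*,q^*$. You instead stay on the physical side: you record that the peakon profile $\psi=e^{-|x|}$ lies in $H^s$ for $s<3/2$ and that translation is a strongly continuous isometry of $H^s$, and then kill $u(t)-C$ term by term via the triangle inequality after an exact algebraic splitting adapted to whether $T=T^c$ or $T=T^p$. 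The two arguments buy different things. Your version makes explicit a step the paper leaves implicit: the existence of the limits $p_i^*,q_i^*$, which you obtain by noting that boundedness of $p_1,p_2$ (from the proposition) makes the right-hand side of \eqref{eq5} bounded, hence the solution uniformly Lipschitz and continuously extendable to $[0,T]$; the paper's limits $p^*,q^*$ presuppose this. On the other hand, the paper's computation is more self-contained, since your ``standard'' translation-continuity fact is itself proved by exactly the dominated-convergence argument the paper carries out directly. One small bookkeeping point: in the $T=T^p$ case you assume $p_2^*\neq 0$, whereas the paper also allows both momenta to vanish (so $C=0$); your decomposition handles that case verbatim with $p_2^*=0$, so nothing breaks, but it is worth stating.
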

\begin{proof}
 We take the Fourier transform of $u$, and we have
$$ \widehat{u}(\xi,t)= \frac{2p_1e^{-i\xi q_1}}{1+\xi^2}+\frac{2p_2e^{-i\xi q_2}}{1+\xi^2}.
$$
Similarly, we can find the Fourier transform of $C$ as
$$ \widehat{C}(\xi)= \frac{2p^{*}e^{-i\xi q^*}}{1+\xi^2}.
$$
Calculating the $H^{s}$ norm of $u(t)-C$ gives us
$$\lim_{t\to \ T^{-}}||u(t)-C||_{H^{s}} ^2= 4\lim_{t\to \ T^{-}}\int_{\R} ({1+\xi^2})^{s-2} |p_1e^{-i\xi q_1}+p_2e^{-i\xi q_2}-p^{*}e^{-i\xi q^*}|^2d\xi.$$
We can bound the quantity inside the absolute value by $(|p_1|+|p_2|+|p^*|) \leq M < \infty.$ Let $v(\xi)=(1+\xi^2)^{s-2}\cdot M^2$ then $v$ dominates our original integral and $v$ is itself integrable when $s<3/2.$ Therefore, we may apply the Dominated Convergence Theorem and bring the limit inside the integral. 
 \begin{equation} 
    \lim_{t\to \ T^{-}}||u(t)-C||_{H^{s}} ^2=
 4\int_{\R} ({1+\xi^2})^{s-2} |p_1(T)e^{-i\xi q_1(T)}+p_2(T)e^{-i\xi q_2(T)}-p^*e^{-i\xi q^*}|^2d\xi.
    \end{equation}   
 By definition of $p^*$ and $q^*$,  the term inside the integral is zero. 
     \end{proof}

\bibliographystyle{plain}
\bibliography{ab-family-illposedness_submitted.bib}

\end{document}